\definecolor{darkred}{rgb}{1,0,0} %can change the intensity in [0,1]
\definecolor{darkgreen}{rgb}{0,0.8,0}
\definecolor{darkblue}{rgb}{0,0,1}
\def\reflb#1#2{\begingroup
    #2%
    \def\@currentlabel{#2}%
    \phantomsection\label{#1}\endgroup
}
\numberwithin{equation}{section}
\newtheorem {Theorem}{Theorem}
\numberwithin{Theorem}{section}
\newtheorem {Lemma}[Theorem]    {Lemma}
\newtheorem {Proposition}[Theorem]{Proposition}
\theoremstyle{definition}
\newtheorem{Definition}[Theorem]{Definition}
\theoremstyle{remark}
\newtheorem{Remark}[Theorem]{Remark}
\newtheorem{Example}[Theorem]{Example}
\def    \eps    {\epsilon}
\newcommand{\CA}{{\mathcal A}}
\newcommand{\CS}{{\mathcal S}}
\newcommand{\Ham}{{\mathit{Ham}}}
\newcommand{\tHam}{{\widetilde{\mathit{Ham}}}}
\newcommand{\id}{{\mathit id}}
\newcommand{\bPP}{\bar{\mathcal P}}
\def    \F      {{\mathbb F}}
\def    \R      {{\mathbb R}}
\def    \Z      {{\mathbb Z}}
\def    \N      {{\mathbb N}}
\def    \CP     {{\mathbb C}{\mathbb P}}
\def    \RP     {{\mathbb R}{\mathbb P}}
\def    \12    {{\frac{1}{2}}}
\def    \tSp     {\operatorname{\widetilde{Sp}}}
\def    \HF     {\operatorname{HF}}
\def    \HQ     {\operatorname{HQ}}
\def    \H     {\operatorname{H}}
\def    \Co    {\operatorname{C}}
\def    \CMo     {\operatorname{CM}}
\def    \CF     {\operatorname{CF}}
\def    \bPP     {\bar{\mathcal{P}}}
\def    \bx     {\bar{x}}
\def    \by     {\bar{y}}
\def    \bz     {\bar{z}}
\def  \hmu      {\operatorname{\hat{\mu}}}
\def \Fl   {\scriptscriptstyle{Fl}}
\def \qq   {\mathrm{q}}
\def \hh   {\mathrm{h}}
\newcommand \Sq {\operatorname{Sq}}
\newcommand \QS {\mathcal{QS}}
 \newcommand \PS {\mathcal{PS}}
\newcommand \QC {\operatorname{QC}}
\newcommand \eq {\scriptscriptstyle{eq}}
\begin{document}

%%%%%%%%%%%%%%%%%%%%%%%%%%%%%%
%   TEXT FORMATTING

\setlength{\smallskipamount}{6pt}
\setlength{\medskipamount}{10pt}
\setlength{\bigskipamount}{16pt}

%%%%%%%%%%%%%%%%%%%%%%%%%%

%%%%%%%%%%%%%%%%%%%%%%%%%%

%%%%%%%%%%%           BEGINNING OF  TEXT

%%%%%%%%%%%%%%%%%%%%%%%%%%

\title[Pseudo-Rotations and quantum Steenrod squares]{From
  Pseudo-Rotations to holomorphic curves via quantum Steenrod squares
}

\author[Erman \c C\. inel\. i]{Erman \c C\. inel\. i}
\author[Viktor Ginzburg]{Viktor L. Ginzburg}
\author[Ba\c sak G\"urel]{Ba\c sak Z. G\"urel}

\address{E\c C and VG: Department of Mathematics, UC Santa Cruz, Santa
  Cruz, CA 95064, USA} \email{scineli@ucsc.edu}
\email{ginzburg@ucsc.edu}

\address{BG: Department of Mathematics, University of Central Florida,
  Orlando, FL 32816, USA} \email{basak.gurel@ucf.edu}

\subjclass[2010]{53D40, 37J10, 37J45} 

\keywords{Pseudo-rotations, periodic orbits, Hamiltonian
  diffeomorphisms, Steenrod squares, holomorphic curves}

\date{\today} 

\thanks{The work is partially supported by NSF CAREER award
  DMS-1454342 (BG) and by Simons Collaboration Grant 581382 (VG)}

%\bigskip

\begin{abstract}
  In the context of symplectic dynamics, pseudo-rotations are
  Hamiltonian diffeomorphisms with finite and minimal possible number
  of periodic orbits. These maps are of interest in both dynamics and
  symplectic topology. We show that a closed, monotone symplectic
  manifold, which admits a non-degenerate pseudo-rotation, must have a
  deformed quantum Steenrod square of the top degree element and hence
  non-trivial holomorphic spheres. This result (partially) generalizes
  a recent work by Shelukhin and complements the results by the
  authors on non-vanishing Gromov--Witten invariants of manifolds
  admitting pseudo-rotations.
\end{abstract}

\maketitle

\tableofcontents

\section{Introduction and main results}

\subsection{Introduction}
\label{sec:intro}
In this paper we continue investigating connections between
pseudo-rotations and symplectic topology of the underlying manifold.
We show that a closed monotone symplectic manifold, which admits a
non-degenerate Hamiltonian pseudo-rotation, must have a deformed
quantum Steenrod square of the top degree element and, as a
consequence, non-trivial holomorphic spheres. This result (partially)
generalizes a recent work by Shelukhin, \cite{Sh}, and complements the
results from \cite{CGG}.

In the context of symplectic dynamics, pseudo-rotations are
Hamiltonian diffeomorphisms with finite and minimal possible number of
periodic orbits. Such diffeomorphisms of the disk or the 2-sphere
occupy a distinguished place in low-dimensional dynamical systems
theory (see, e.g., \cite{AK, FaHe, FK, FM, LCY} and references
therein). In all dimensions, pseudo-rotations can have extremely
interesting dynamics.  For instance, some manifolds (e.g., $\CP^n$)
admit pseudo-rotations with finite number of ergodic measures; see
\cite{AK, FK, LRS}. Somewhat surprisingly, as has been recently
discovered, symplectic topological techniques can be effectively used
to study the dynamics of pseudo-rotations in dimension two (see
\cite{Br:Annals, Br, BH}) and also in higher dimensions; see
\cite{GG:PR, GG:PRvR}.

Many (in all likelihood, most) closed symplectic manifolds do not
admit pseudo-rotations or even Hamiltonian diffeomorphisms with
finitely many periodic orbits as the Conley conjecture asserts; see
\cite{GG:survey}. The state of the art result is that a closed
symplectic manifold $(M,\omega)$ does not admit such a Hamiltonian
diffeomorphism unless there exists a class $A\in\pi_2(M)$ such that
$\left<[\omega],A\right>>0$ and $\left<c_1(TM),A\right>>0$, \cite{Ci,
  GG:Rev}. For instance, this criterion rules out all cases where the
class $[\omega]$ or $c_1(TM)$ is aspherical and also negative monotone
symplectic manifolds. While this theorem is satisfactory on the
topological level, one can expect the failure of the Conley conjecture
for $M$ to also have strong consequences on the symplectic topological
level. For instance, the so-called Chance--McDuff conjecture, inspired
by \cite{McD}, claims that a symplectic manifold admitting a
Hamiltonian diffeomorphism with finitely many periodic orbits must
have non-vanishing Gromov--Witten invariants.

There are a few slightly different working definitions of
pseudo-rotations, all reflecting the same condition that the map must
have the least possible number of periodic orbits; \cite{CGG, Sh:HZ,
  Sh}.  In this paper, we define a pseudo-rotation as a Hamiltonian
diffeomorphism $\varphi$ such that all iterates $\varphi^k$, $k\in\N$,
are non-degenerate and the Floer differential (over $\F_2=\Z_2$)
vanishes for all $\varphi^k$; see Definition \ref{def:PR} and Remark
\ref{rmk:generalization}. (Here and throughout all cohomology groups
are with $\F_2$-coefficients.) As of this writing, all known
Hamiltonian diffeomorphisms with finitely many periodic orbits are
pseudo-rotations in this sense and these two classes might well
coincide; see \cite{Sh:HZ} for some results in this direction.

Several variants of the Chance--McDuff conjecture have been
established recently for pseudo-rotations. In \cite{CGG}, it was
proved that a weakly monotone symplectic manifold with minimal Chern
number $N>1$, admitting a pseudo-rotation $\varphi$, must have
deformed quantum product and, in particular, non-vanishing
Gromov--Witten invariants, under certain additional index assumptions
on $\varphi$. These extra assumptions appear to be satisfied for most,
but certainly not all, pseudo-rotations. Simultaneously and
independently, in \cite{Sh}, along with some other results the quantum
Steenrod square of the top degree class was shown to be deformed for
monotone symplectic manifolds $M^{2n}$ with Poincar\'e duality
property (e.g., when $N\geq n+1$), admitting pseudo-rotations. (These
pseudo-rotations need not be non-degenerate.)

The quantum Steenrod square is a symplectic topological invariant
introduced in \cite{Se} and then studied in \cite{Wi1, Wi2}; see also
\cite{Be, BC, Fu, He, SS, ShZ} for relevant work. It is a cohomology
operation $\QS$ on the quantum cohomology $\HQ^*(M)$, which is a
deformation of the standard Steenrod square. In other words,
$\QS(\alpha)=\Sq(\alpha)+O(\qq)$, where $\qq$ is the generator of the
Novikov ring and $\alpha\in \H^*(M)$.  Roughly speaking, a deformed
quantum Steenrod square, just as a deformed quantum product $*$,
detects certain holomorphic spheres in $M$, but in general these
spheres need not be related to Gromov--Witten invariants. Furthermore,
$\QS$ can also be viewed as a deformation of the standard quantum
product (with respect to a different parameter $\hh$) in the same
sense as $\Sq$ can be thought of as a deformation of the cup-product.
On the Floer cohomology side, $\QS$ is closely related to another
quantum cohomology operation also introduced in \cite{Se}, the
equivariant pair-of-pants product $\mathlarger{\wp}$, which plays a
crucial role in our proof. We will briefly discuss both of these
operations in Section \ref{sec:prelim}.

Let $\varpi$ be the generator of the top degree cohomology group
$\H^{2n}(M^{2n})$. Our main result, Theorem \ref{thm:main}, is that
$\QS(\varpi)$ is different from $\Sq(\varpi)=\hh^{2n}\varpi$ whenever
$M$ is monotone and admits a pseudo-rotation. Here we treat the
Steenrod square $\Sq$ as a degree doubling map
$$
\Sq\colon \H^*(M)\to \H^*(M)[[\hh]], \quad
\Sq(\alpha)=\sum_{i=0}^{|\alpha|} \hh^{|\alpha|-i}\Sq^i(\alpha),
$$
where $|\hh|=1$ and $\Sq^i(\alpha)$ is the $i$-th standard Steenrod
square of $\alpha\in \H^*(M)$, and $\H^*(M)[[\hh]]$ is the space of
formal power series with coefficients in $\H^*(M)$, \cite{Se, Wi1,
  Wi2}. As a consequence, there is a non-trivial holomorphic sphere
through every point of $M$.

Apart from the non-degeneracy condition, this result generalizes the
main theorem from \cite{Sh} by eliminating the Poincar\'e duality
requirement; cf.\ Remark \ref{rmk:generalization}. On the other hand,
it is difficult to compare Theorem \ref{thm:main} and the results from
\cite{CGG} detecting a deformed quantum product; for these theorems
hold under different conditions and provide different symplectic
topological information. Note however that the statement that $\QS$ is
deformed is obviously much weaker than that its 0-th order term in
$\hh$, the quantum square, is deformed, i.e.,
$\alpha *\alpha \neq \alpha\cup\alpha$ for some $\alpha\in \H^*(M)$.

The proof of Theorem \ref{thm:main} hinges on the same idea as the
argument in \cite{CGG}, although the latter proof is considerably more
involved. In both cases, a non-trivial deformation comes roughly
speaking from constant (to be more precise, zero energy) pair-of-pants
solutions of the Floer equation: equivariant in the present case and
standard for the quantum product.

For the standard pair-of-pants product, a zero energy curve is easily
seen to be automatically regular provided that the Conley--Zehnder
indices allow this. Namely, consider the cohomology pair-of-pants
product of iterated capped periodic orbits
$\bx^{k_1}*\ldots*\bx^{k_r}$.  Then the least action term in this
product is $\bx^{k}$ with $k=k_1+\ldots+k_r$, i.e.,
$$
\bx^{k_1}*\ldots*\bx^{k_r}=\bx^k+\ldots ,
$$
where the dots stand for higher action terms, if and only if $\bx^k$
has the ``right'' Conley--Zehnder index.  Explicitly, with our
conventions, the latter index condition is that
\[
\mu\big(\bx^k\big)=\mu\big(\bx^{k_1}\big)+\ldots+\mu\big(\bx^{k_r}\big)
+(r-1)n,
\]
and the main difficulty in the proof in \cite{CGG} is to guarantee
that this requirement is satisfied for some orbit $\bx$ and that the
resulting product is different from the cup product.

On the other hand, for the equivariant pair-of-pants product, $\bx^2$
(or, to be more precise, its product with a suitable power of $\hh$)
is always, without any index requirement, the least action term in
$\mathlarger{\wp}(\bx \otimes \bx)$, although now this is a
non-trivial fact proved in \cite{Se}; see also \cite{ShZ}.  This is
sufficient to show that the quantum Steenrod square of $\varpi$ is
deformed whenever $M$ admits a pseudo-rotation, by using
simultaneously the action and $\hh$-adic filtrations of the
equivariant Floer cohomology.

\subsection{Main result: from pseudo-rotations to the Steenrod
  square}
\label{sec:results}
While, as has been mentioned above, there are several slightly
different ways to define a pseudo-rotation, for our purposes it is
convenient to adopt the following definition.

\begin{Definition}
  \label{def:PR}
  A Hamiltonian diffeomorphism $\varphi$ of a closed weakly monotone
  symplectic manifold is called a \emph{pseudo-rotation} if all
  iterates $\varphi^k$, $k\in\N$, are non-degenerate and the Floer
  differential over $\F_2$ vanishes for all $\varphi^k$.
\end{Definition}

We note that while the Floer differential depends on an auxiliary
structure (an almost complex structure), its vanishing is
well-defined, i.e., independent of this structure.

We are now in a position to state the main result of this paper.  Let
$\QS$ be the quantum Steenrod square; see Section \ref{sec:prelim} for
a brief discussion and further references.

\begin{Theorem}
  \label{thm:main}
  Assume that a closed monotone symplectic manifold $(M^{2n}, \omega)$
  admits a pseudo-rotation. Then the quantum Steenrod square $\QS$ of
  the top degree cohomology class $\varpi\in \H^{2n}(M;\F_2)$ is
  deformed: $\QS(\varpi)\neq \hh^{2n}\varpi$.
\end{Theorem}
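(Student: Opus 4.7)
The plan is to argue by contradiction: assume $\QS(\varpi)=\hh^{2n}\varpi$, translate this via the PSS isomorphism into a statement about the equivariant pair-of-pants product $\wp$ in $S^1$-equivariant Floer cohomology, and then derive a contradiction by simultaneously exploiting the action filtration and the $\hh$-adic filtration on $\HF^*_{S^1}(\varphi^2)$.

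First I would fix, for a one-periodic Hamiltonian generating $\varphi$, a capped one-periodic orbit $\bx$ whose Floer cohomology class is PSS-dual to $\varpi \in \H^{2n}(M)$. The pseudo-rotation hypothesis forces the Floer differentials of both $\varphi$ and $\varphi^2$ to vanish over $\F_2$, so the Floer chain complex coincides with Floer cohomology and every capped orbit represents a distinct cohomology class. In particular, the action filtration descends to an honest filtration on $\HF^*(\varphi^k)$, and it makes sense to speak of the \emph{lowest-action term} in any cocycle representative. Since PSS intertwines $\QS$ with $\wp$, in the spirit of \cite{Se, Wi1, Wi2, ShZ}, the assumption $\QS(\varpi)=\hh^{2n}\varpi$ becomes the equivariant Floer identity $\wp([\bx]\otimes[\bx]) = \hh^{2n}[\by]$ in $\HF^*_{S^1}(\varphi^2)$, where $[\by]$ is the PSS-representative of $\varpi$ for $\varphi^2$.

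Next I would apply the key technical input of \cite{Se}, also developed in \cite{ShZ}: at the chain level, the equivariant pair-of-pants $\wp(\bx\otimes\bx)$ always contains the iterate $\bx^2$ multiplied by a specific power of $\hh$, arising from constant, i.e., zero-energy, solutions; moreover this term has the lowest action, namely $2\CA(\bx)$, among all contributions to $\wp(\bx\otimes\bx)$. Crucially, this conclusion requires no Conley-Zehnder index condition on $\bx$, in sharp contrast to the ordinary pair-of-pants situation which is the source of the considerable technical complications in \cite{CGG}. Comparing this lowest-action, lowest-$\hh$-degree occurrence of $\bx^2$ on the left with the right-hand side $\hh^{2n}[\by]$, and using simultaneously the action and $\hh$-adic filtrations of $\HF^*_{S^1}(\varphi^2)$, I would deduce that either $\by\neq \bx^2$ and the action filtration is violated, or $\by = \bx^2$ and the $\hh$-power mismatches, in either case contradicting the assumed identity.

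The main obstacle is precisely this simultaneous filtration bookkeeping. A priori, $\by$ might coincide with $\bx^2$ as a capped orbit of $\varphi^2$, so the action filtration alone does not suffice to conclude and one genuinely needs the extra information carried by the $\hh$-adic valuation of the equivariant theory. Tracking Conley-Zehnder indices, action values, and $\hh$-powers simultaneously, and in particular verifying that the zero-energy contribution $\bx^2$ appears at an $\hh$-power strictly less than $2n$ (so that it cannot be absorbed into $\hh^{2n}[\by]$), is the delicate point and is exactly where the specific combinatorics of Seidel's equivariant construction enter the argument.
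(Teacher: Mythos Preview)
Your outline captures the paper's Step~1 but misses the crucial Step~2, and the gap is real rather than cosmetic. The power of $\hh$ attached to $\bx^2$ in $\PS(\bx+\ldots)$ is explicitly $m=2\mu(\bx)-\mu(\bx^2)+n=3n-\mu(\bx^2)$ (equation~\eqref{eq:h-deg}). Your contradiction requires $m<2n$, i.e., $\mu(\bx^2)>n$. But this \emph{index jump} is not automatic: for instance, for a rotation of $S^2$ by an angle $\theta\in(0,\pi)$ one has $\mu(\bx)=\mu(\bx^2)=1$, so $m=2n$ exactly (see Example~\ref{Ex:F}). In that regime there is no $\hh$-adic mismatch and your dichotomy collapses: the lowest-action term $\hh^{2n}\bx^2$ is perfectly compatible with a right-hand side divisible by $\hh^{2n}$.

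The paper resolves this by replacing $\varphi$ with a suitable iterate $\varphi^{r}$, $r=2^{\ell_0}$, chosen so that $\mu(\bx^{r})=n$ but $\mu(\bx^{2r})>n$; this is possible because $\mu(\bx^k)\nearrow\infty$. The non-obvious point, which your proposal does not address, is that one must check that after passing to $\varphi^{2^\ell}$ the lowest-action term in $\Phi(\varpi)$ is still $\bx^{2^\ell}$ for all $\ell\le\ell_0$. This is the content of Step~2 and relies on Lemma~\ref{lemma:F0vsF} (that $F=F_0+O(\hh)$) together with another pass through the diagram~\eqref{eq:diag1}, using again the contradiction hypothesis $\QS(\varpi)=\hh^{2n}\varpi$. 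Without this iteration argument your proof is incomplete. (Two smaller points: the relevant equivariance is $\Z_2$, not $S^1$; and the passage from $\QS$ to the Floer side goes through the equivariant PSS map $\Phi_{\eq}$, whose relation to the ordinary $\Phi$ is exactly what Lemma~\ref{lemma:Phi} controls --- you cannot simply identify $\Phi_{\eq}(\varpi)$ with $\by$.)
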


\begin{Remark}
  \label{rmk:generalization}
  Although we deliberately limited our attention to a relatively
  narrow class of manifolds and pseudo-rotations, we expect the
  theorem to have several generalizations accessible by the same
  method with relatively minor modifications. First of all, the
  theorem should hold for a slightly broader class of pseudo-rotations
  considered in \cite{Sh:HZ, Sh}, allowing for some degenerations.
  \footnote{In fact, after this work had been completed we have
    learned of \cite{Sh:new} where a similar theorem has been proved
    in such a setting by a different method.} Next, one might be able
  to replace the assumption that $M$ is monotone by the condition that
  it is weakly monotone; for one can expect the constructions of the
  equivariant pair-of-pants product $\mathlarger{\wp}$ from \cite{Se}
  and of the quantum Steenrod square $\QS$ to extend to this setting
  with some modifications; see \cite{SW}. Finally, one might also be
  able to extend Theorem \ref{thm:main} to the quantum Steenrod $\Z_p$
  cohomology operations (see \cite{ShZ}) although some details of the
  underlying machinery are yet to be finalized.
  
\end{Remark}

\noindent{\bf Acknowledgements.} The authors are grateful to Egor
Shelukhin, Nicholas Wilkins and the referee for useful remarks and
discussions.
  
\section{Conventions and notation}
\label{sec:conv}
For the reader's convenience we set here our conventions and notation
and briefly recall some basic definitions. The reader may want to
consult this section only as needed.

Throughout this paper, the underlying symplectic manifold $(M,\omega)$
is assumed to be closed and (strictly) monotone, i.e.,
$[\omega]|_{\pi_2 (M)}=\lambda c_1(TM)|_{\pi_2(M)}\neq 0$ for some
$\lambda>0$. The \emph{minimal Chern number} of $M$ is the positive
generator $N$ of the subgroup
$\left<c_1(TM),\pi_2(M)\right>\subset \Z$.

A \emph{Hamiltonian diffeomorphism} $\varphi=\varphi_H=\varphi_H^1$ is
the time-one map of the time-dependent flow $\varphi^t=\varphi_H^t$ of
a 1-periodic in time Hamiltonian $H\colon S^1\times M\to\R$, where
$S^1=\R/\Z$.  The Hamiltonian vector field $X_H$ of $H$ is defined by
$i_{X_H}\omega=-dH$. Such time-one maps form the group
$\Ham(M,\omega)$ of Hamiltonian diffeomorphisms of $M$. In what
follows, it will be convenient to view Hamiltonian diffeomorphisms as
elements of the universal covering $\tHam(M,\omega)$.

Let $x\colon S^1\to M$ be a contractible loop. A \emph{capping} of $x$
is an equivalence class of maps $A\colon D^2\to M$ such that
$A|_{S^1}=x$. Two cappings $A$ and $A'$ of $x$ are equivalent if the
integral of $\omega$ (or of $c_1(TM)$ since $M$ is strictly monotone)
over the sphere obtained by attaching $A$ to $A'$ is equal to zero. A
capped closed curve $\bar{x}$ is, by definition, a closed curve $x$
equipped with an equivalence class of cappings, and the presence of
capping is indicated by a bar.

The action of a Hamiltonian $H$ on a capped closed curve
$\bar{x}=(x,A)$ is
$$
\CA_H(\bar{x})=-\int_A\omega+\int_{S^1} H_t(x(t))\,dt.
$$
The space of capped closed curves is a covering space of the space of
contractible loops, and the critical points of $\CA_H$ on this space
are exactly the capped 1-periodic orbits of $X_H$.

The $k$-periodic \emph{points} of $\varphi$ are in one-to-one
correspondence with the $k$-periodic \emph{orbits} of $H$, i.e., of
the time-dependent flow $\varphi^t$. Recall also that a $k$-periodic
orbit of $H$ is called \emph{simple} if it is not iterated.  A
$k$-periodic orbit $x$ of $H$ is said to be \emph{non-degenerate} if
the linearized return map $d\varphi^k \colon T_{x(0)}M \to T_{x(0)}M$
has no eigenvalues equal to one. We call $x$ \emph{strongly
  non-degenerate} if all iterates $x^k$ are non-degenerate. A
Hamiltonian $H$ is non-degenerate if all its 1-periodic orbits are
non-degenerate. \emph{In what follows we always assume that $H$ is
  strongly non-degenerate, i.e., all periodic orbits of $H$ (of all
  periods) are non-degenerate.} We denote the collection of capped
$k$-periodic orbits of $H$ by $\bPP_k(H)$ or $\bPP_k(\varphi)$.

Let $\bar{x}$ be a non-degenerate capped periodic orbit.  The
\emph{Conley--Zehnder index} $\mu(\bar{x})\in\Z$ is defined, up to a
sign, as in \cite{Sa,SZ}. In this paper, we normalize $\mu$ so that
$\mu(\bar{x})=n$ when $x$ is a non-degenerate maximum (with trivial
capping) of an autonomous Hamiltonian with small Hessian. The
\emph{mean index} $\hmu(\bx)\in\R$ measures, roughly speaking, the
total angle swept by certain (Krein--positive) unit eigenvalues of the
linearized flow $d\varphi^t|_{\bx}$ with respect to the trivialization
associated with the capping; see \cite{Lo,SZ}. The mean index is
defined even when $x$ is degenerate, and we always have the inequality
$\big|\hmu(\bx)-\mu(\bx)\big|\leq n$. Moreover, if $x$ is
non-degenerate, the inequality is strict:
\begin{equation}
\label{eq:mean-cz}
\big|\hmu(\bx)-\mu(\bx)\big|<n .
\end{equation}
The mean index is homogeneous with respect to iteration:
$\hmu\big(\bx^k\big)=k\hmu(\bx)$. (The capping of $\bx^k$ is obtained
from the capping of $\bx$ by taking its $k$-fold cover branched at the
origin.)

Fixing an almost complex structure, which will be suppressed in the
notation, we denote by $\left(\CF^*(\varphi), d_{\Fl}\right)$ and
$\HF^*(\varphi)$ the Floer complex and cohomology of $\varphi$ over
$\F_2=\Z_2$; see, e.g., \cite{MS, Sa}. (As has been mentioned in the
introduction, here all complexes and cohomology groups are over
$\F_2$.) The complex $\CF^*(\varphi)$ is generated by the capped
1-periodic orbits $\bx$ of $H$, graded by the Conley--Zehnder index,
and filtered by the action. The differential $d_{\Fl}$ is the upward
Floer differential: it increases the action and also the index by
one. With these conventions, we have the canonical isomorphism
\begin{equation}
\label{eq:Fl-qt}
\Phi\colon
\HQ^*(M)\stackrel{\cong}{\longrightarrow} \HF^*(\varphi)[n],
\end{equation}
where $\HQ^*(M)$ is the quantum homology of $M$; see, e.g.,
\cite{Sa,MS} and references therein. Depending on the context, $\Phi$
is the PSS-isomorphism or the continuation map or a combination of the
two.

For instance, assume that $H$ is $C^2$-small and autonomous (i.e.,
independent of $t$), and has a unique maximum and a unique
minimum. Then the top degree cohomology class
$\varpi\in \H^{2n}(M)\subset \HQ^{2n}(M)$ corresponds to the maximum
of $H$, which has degree $n$ in $\HF^*(\varphi)$; the unit
$1\in \HQ^0(M)$ corresponds to the minimum of $H$ which has degree
$-n$ in $\HF^*(\varphi)$. We denote by $|\alpha|$ the degree of
$\alpha\in \HQ^*(M)$ or $\alpha\in\HF^*(\varphi)$.

The cohomology groups $\HQ^*(M)$ and $\HF^*(\varphi)$ and the complex
$\CF^*(\varphi)$ are modules over a Novikov ring $\Lambda$, and
$\HQ^*(M)\cong \H^*(M)\otimes \Lambda$ (as a module). There are
several choices of $\Lambda$; see, e.g., \cite{MS}. For our purposes,
it is convenient to take the field of Laurent series $\F_2((\qq))$
with $|\qq|=2N$ as $\Lambda$.  Then $\Lambda$ naturally acts on
$\CF^*(\varphi)$ by recapping, and multiplication by $\qq$ corresponds
to the recapping by $A\in\pi_2(M)$ with $\left<c_1(TM),A\right>=N$.

When $\varphi$ is a pseudo-rotation (or, more generally, if
$d_{\Fl}=0$), the isomorphism \eqref{eq:Fl-qt} turns into the natural
identification
$$
\HQ^*(M)[-n]\cong \HF^*(\varphi) \cong \CF^*(\varphi).
$$
Since any iterate $\varphi^k$ is also a pseudo-rotation, we have
$$
\HQ^*(M)[-n] \cong \HF^*\big(\varphi^k\big) 
\cong   \CF^*\big(\varphi^k\big).
$$
It is worth emphasizing that the resulting isomorphism between
$\CF^*(\varphi)=\HF^*(\varphi)$ and
$\CF^*\big(\varphi^k\big)=\HF^*\big(\varphi^k\big)$, which is given by
the continuation map, is usually different from the iteration map
$\bx\mapsto \bx^k$. For instance, unless $M$ is aspherical the
iteration map is not onto and, in general,
$\mu(\bx)\neq \mu\big(\bx^k\big)$ even in the aspherical case.

The quantum homology $\HQ^*(M)$ carries the \emph{quantum product},
denoted here by $*$, which makes it into a graded-commutative algebra
over $\Lambda$ with unit $1$. This product is a deformation (in $\qq$)
of the cup product: $\alpha*\beta=\alpha\cup\beta+O(\qq)$. For
instance, $\alpha_1*\alpha_n=\qq\cdot 1$ in $\HQ^*(\CP^n)$, where
$\alpha_l$ stands for the generator (i.e., the only non-zero element)
of $\H^{2l}(\CP^n)$. In Floer cohomology, the quantum product
corresponds to the so-called \emph{pair-of-pants product}
$$
\HF^*(\varphi)\otimes \HF^*(\varphi)\to \HF^*\big(\varphi^2\big)[n],
$$
which we also denote by $*$. We emphasize that with our conventions
$|\alpha * \beta|=|\alpha|+|\beta|+n$ in Floer cohomology. When
\eqref{eq:Fl-qt} is applied to $\varphi$ and $\varphi^2$, the
pair-of-pants product turns into the quantum product:
$$
\HQ^*(M)\otimes\HQ^*(M)\cong \HF^*(\varphi)\otimes \HF^*(\varphi)\to
\HF^*\big(\varphi^2\big)\cong \HQ^*(M).
$$
Here, for the sake of simplicity, we suppressed the shifts of degree
as we often will in what follows.

\section{Preliminaries}
\label{sec:prelim}
Drawing heavily from \cite{Se} and also \cite{ShZ,Wi1,Wi2}, we now
recall the construction of the equivariant pair-of-pants product and
its relation to the quantum Steenrod square in the case where the
ambient manifold $M$ is closed and monotone. Then we will take a
closer look at the effect of the additional condition that the
Hamiltonian diffeomorphism $\varphi$ is a pseudo-rotation.

\subsection{Equivariant pair-of-pants product} Let us start by briefly
outlining the definition of the \emph{equivariant pair-of-pants
  product}, closely following \cite{Se}. In this (sub)section we make
little or no use of the fact that the underlying manifold $M$ is
compact and that we are interested in the global, rather than
filtered, Floer cohomology; most of the discussion carries over to the
filtered setting verbatim. (Throughout the paper, by the filtered
Floer cohomology we mean the homology of the Floer complex restricted
to some action interval and the global Floer cohomology stands for the
homology of the entire Floer complex.)  These extra conditions afford
some immediate simplifications which we will spell out in Section
\ref{sec:QS} based on \cite{Wi1, Wi2}. It is worth pointing out that
the underlying assumptions in \cite{Se} are somewhat different from
ours: there the ambient manifold $M$ is assumed to be exact rather
than closed and monotone, but the definitions and results translate
readily to our framework; see \cite{ShZ, Wi1, Wi2}.

The target space of the equivariant pair-of-pants product is the
$\Z_2$-equivariant cohomology $\HF_{\eq}^*\big(\varphi^2\big)$. This
is the homology of the complex $\CF_{\eq}^*\big(\varphi^2\big)$, which
is the graded $\F_2$-vector space
\begin{equation}
  \label{eq:CFeq}
    \CF_{\eq}^*\big(\varphi^2\big):=
    \CF^*\big(\varphi^2\big)[[\hh]]=
    \CF^*\big(\varphi^2\big)\otimes_{\Lambda}
    \Lambda[[\hh]]
\end{equation}
with $|\hh|=1$, where here and throughout the paper $[[\hh]]$
indicates the space of formal power series, equipped with the
differential
\[
d_{\eq}=d_{\Fl}+\sum_{j=1}^{\infty}\hh^j d_j
\]
emulating the construction of the $\Z_2$-equivariant cohomology via
Morse theory on the Borel quotient; see \cite[Sect.\ 4.2]{Se} for
details. (Here $\F_2[[\hh]]\cong \H^*(\RP^\infty;\F_2)$.) This complex
and hence the homology carry two increasing filtrations: the action
filtration and the $\hh$-adic filtration. Moreover,
$\CF_{\eq}^*\big(\varphi^2\big)$ is a $\Lambda$-module with the action
of $\qq$ given by recapping as in the ordinary Floer complex. The
differential $d_{\eq}$ is $\Lambda$-linear, and hence
$\HF_{\eq}^*\big(\varphi^2\big)$ is also a $\Lambda$-module.

\begin{Remark}
  In this connection we point out that there are two slightly
  different constructions of the equivariant Floer cohomology for
  Hamiltonians with symmetry, e.g., the $S^1$-symmetry for autonomous
  Hamiltonians and the $\Z_k$-symmetry for $k$-iterated Hamiltonian
  diffeomorphisms. The first construction uses a parametrized
  perturbation of the original Hamiltonian and the action functional;
  see \cite{BO,Vi} and also \cite{GG:convex}. This is a Floer
  theoretic analogue of taking a Morse perturbation of the pull-back
  (which is Morse--Bott) of the original Morse function to the Borel
  quotient. In the second construction one keeps the Hamiltonian and
  the action functional unchanged, but uses a parametrized almost
  complex structure and continuation maps along the gradient lines of
  an auxiliary Morse function on the classifying space to define the
  differential; see \cite{Hu,Se,SS}. This approach results in a
  complex and cohomology \emph{a priori} better behaving with respect
  to the action filtration. This is the complex considered here. (The
  difference becomes apparent in the context of the filtered Leray
  spectral sequence converging to the equivariant cohomology and
  associated with the $\hh$-adic filtration: it is not even clear how
  to define the $\hh$-adic filtration in the framework of the first
  construction without additional assumptions on the perturbation; see
  \cite{BO}.) We also note that for monotone symplectic manifolds
  there are other choices in the definition of
  $\CF_{\eq}^*\big(\varphi^2\big)$, e.g.,
  $\CF^*\big(\varphi^2\big)[\hh]$, which we briefly touch upon in
  Remark \ref{rmk:CFeq}.
\end{Remark}

The domain of the equivariant pair-of-pants product map is the
ordinary group cohomology of $\Z_2$ with coefficients in the complex
$\CF^*(\varphi)\otimes_{\F_2} \CF^*(\varphi)$ with the $\Z_2$-action
given by the involution $\iota$ interchanging the two factors. In
other words, this is the cohomology of the complex
\begin{equation*}
%\label{eq:group-complex}
  \Co^*\big(\Z_2; \CF^*(\varphi) \otimes_{\F_2}
  \CF^*(\varphi)\big)
  :=\big ( \CF^*(\varphi)\otimes_{\F_2}
  \CF^*(\varphi) \big )[[\hh]]
\end{equation*}
equipped with the differential $d_{\Z_2}=d_{\Fl}+\hh(\id+\iota)$,
where the first term stands for the differential induced by $d_{\Fl}$
on the tensor product. This complex also carries two increasing
filtrations: the action filtration and the $\hh$-adic filtration.  We
note that the complex is ``unaware'' of simple 2-periodic orbits of
$\varphi$.

The \emph{equivariant pair-of-pants product} is the
$\F_2[[\hh]]$-linear map
\[
  \mathlarger{\wp}\colon \H^*\big(\Z_2; \CF^*(\varphi)
  \otimes_{\F_2} \CF^*(\varphi)\big)\to
  \HF_{\eq}^*\big(\varphi^2\big)
\]
induced by the chain map
\[
  \Co^*\big(\Z_2; \CF^*(\varphi) \otimes_{\F_2}
  \CF^*(\varphi)\big)\to \CF_{\eq}^*\big(\varphi^2\big)
\]
constructed in \cite{Se}. This product is a deformation in $\hh$ of
the pair-of-pants product, i.e., on the chain level the equivariant
pair-of-pants product $\mathlarger{\wp}(c_1 \otimes c_2)$ of $c_1$ and
$c_2$ in $\CF_{eq}^*(\varphi)$ has the form $c_1 *
c_2+O(\hh)$. Furthermore, on the chain level, $\wp$ is bi-linear,
i.e.,
$\wp(\qq c_1\otimes c_2)=\wp(c_1 \otimes \qq c_2)=\qq \wp(c_1 \otimes
c_2)$. Note, however, that
$\qq c_1 \otimes c_2 \neq c_1\otimes \qq c_2$, since the tensor
product is taken over $\F_2$.
  
To get a better understanding of how the map $\mathlarger{\wp}$ works,
note first that the graded space $\CF^*\big(\varphi^2\big)$ has a
canonical involution $\iota'$ given by the shift of time
$x(t)\mapsto x(t+1)$; for the generators of this space are the
2-periodic orbits of $\varphi$. In general, this linear map, extended
to $\CF^*\big(\varphi^2\big)[[\hh]]$, does not commute with $d_{\Fl}$
unless there is a regular 1-periodic (rather than 2-periodic) almost
complex structure. However, when this is the case, one can replace the
complex $\CF_{\eq}^*\big(\varphi^2\big)$ by the former complex with
the differential $d_{\Fl}+\hh(\id+\iota')$. Then $\mathlarger{\wp}$ is
a deformation of the map induced by the pair-of-pants product map
$\CF(\varphi)\otimes_{\F_2} \CF(\varphi)\to \CF\big(\varphi^2\big)$ of
the ``coefficient'' complexes.

\begin{Remark}
  Recall that even when $\iota'$ does not commute with $d_{\Fl}$, it
  becomes an isomorphism of complexes when the target is equipped with
  the Floer differential associated with the time-shifted almost
  complex structure $J'_t=J_{t+1}$. Then, once composed with the
  continuation map, $\iota'$ induces an involution of
  $\HF^*\big(\varphi^2\big)$. In our setting, the global Floer
  cohomology $\HF^*\big(\varphi^2\big)$ and the involution are
  independent of $\varphi$, and hence this involution is the identity
  map.
\end{Remark}

The key property, \cite[Thm.\ 1.3]{Se}, of the equivariant
pair-of-pants product map $\mathlarger{\wp}$ is that when, for
example, $(M,\omega)$ is symplectically aspherical it becomes an
isomorphism once $\hh^{-1}$ is attached to the ground ring, i.e.,
after taking tensor product with the ring of Laurent series
$\F_2((\hh))$. This yields the Floer theoretic analogue of Borel's
localization relating the filtered cohomology $\HF^*(\varphi)$ and
$\HF^*_{\eq}\big(\varphi^2\big)$ and, as a consequence, a variant of
Smith's inequality, cf.\ \cite{CG, He, Sh, ShZ}. Although in this
paper we do not directly use any of these results, we will briefly
revisit them in Remark \ref{rmk:isomorphism}.

Next, consider the map
$\CS\colon\CF^*(\varphi)\to \CF^*(\varphi)\otimes_{\F_2}
\CF^*(\varphi)$ given by $c\mapsto c\otimes c$ for all
$c\in\CF^*(\varphi)$. When needed, we extend this map to
$\CF^*(\varphi)[[\hh]]$ by setting $\CS(\hh c)=\hh \CS(c)$. Note that,
since we tensor over $\F_2$, the map $\CS$ is not homogeneous in
$\qq$. In general, $\CS$ is neither linear (even over $\F_2$) nor,
when linear, is it a chain map. However, $\CS$ is well-defined on the
level of cohomology and becomes $\F_2[[\hh]]$-linear when multiplied
by $\hh$, i.e., as a map
\begin{equation}
  \label{eq:square}
  \hh \,\CS\colon \HF^*(\varphi)[[\hh]] \to
  \H^*\big(\Z_2; \CF^*(\varphi)
  \otimes_{\F_2}  
  \CF^*(\varphi)\big).
\end{equation}
For instance, to see the linearity it suffices to observe that
\[
  \hh\big((c_1+c_2)^2-c_1^2-c_2^2\big)=d_{\Z_2}(c_1\otimes c_2),
\]
when $d_{\Fl}(c_1)=0=d_{\Fl}(c_2)$. As a consequence, $\CS$ itself is
defined and $\F_2[[\hh]]$-linear on the level of cohomology when the
target has no $\hh$-torsion. Then, composing the cohomology map $\CS$
with the equivariant pair-of-pants map $\mathlarger{\wp}$, we obtain a
map
\[
\PS\colon \HF^*(\varphi)[[\hh]]\to \HF_{\eq}^*\big(\varphi^2\big);
\]
the notation is borrowed from \cite{Wi1, Wi2}.  This map is
$\F_2[[\hh]]$-linear whenever the target has no $\hh$-torsion and, as
we will soon see, this condition is automatically satisfied in the
case we are interested in.

\begin{Remark}
\label{rmk:alg}
  Following \cite[Sect.\ 2.1]{Se} note that for purely
  algebraic reasons there is a canonical isomorphism
  \[
  \H^*\big(\Z_2; \CF^*(\varphi)
  \otimes_{\F_2}\CF^*(\varphi)\big)\cong
  \H^*\big(\Z_2; \HF^*(\varphi) \otimes_{\F_2}
  \HF^*(\varphi)\big)
  \]
  Hence the cohomology group on the right can also be thought of as
  the domain of the equivariant pair-of-pants product
  $\mathlarger{\wp}$ and the target of the map $\CS$. Furthermore, the
  map
  \[
  \CS\colon \HF^*(\varphi)((\hh))\to\H^*\big(\Z_2; \CF^*(\varphi)
  \otimes_{\F_2}
  \CF^*(\varphi)\big)\otimes_{\F_2 [[\hh]]}
  \F_2((\hh)),
  \]
  which is $\F_2((\hh))$-linear regardless of whether or not the
  target of $\hh\,\CS$ in \eqref{eq:square} has $\hh$-torsion, is also
  an isomorphism, again for purely algebraic reasons.
\end{Remark}

In general, we still have $\PS$ defined on the chain level as a map
$$
\CF^*(\varphi)[[\hh]]\to \CF^*_{\eq}\big(\varphi^2\big)
$$
such that $\PS(c)= c * c+ O(\hh)$ for $c\in\CF^*(\varphi)$, but it is
neither $\F_2$-linear nor a chain homomorphism. Recall that the map
$\CS$ itself is not homogeneous in $\qq$. However the composition
$\PS$ is homogeneous, since $\wp$ satisfies
$\wp(\qq c_1\otimes c_2)=\wp(c_1 \otimes \qq c_2)$. (Here we have once
again ignored the shift of degree: by construction,
$|\PS(c)|=2|c|+n$.)

By construction the equivariant pair-of-pants map $\mathlarger{\wp}$
and the map $\PS$ preserve the action filtration; cf.\ Remark
\ref{rmk:reg}.

One of the key ingredients in the proof of \cite[Thm.\ 1.3]{Se} is the
following result, which also plays a central role in our argument and
which, slightly deviating from \cite{Se}, we state for the map $\PS$
rather than for $\mathlarger{\wp}$.

\begin{Proposition}[\cite{Se}, Prop.\ 6.7]
  \label{prop:PS}
  Consider a collection of orbits $\bx_i\in \bPP_1(\varphi)$,
  $i=1,\ldots, \ell$, such that $\CA_H(\bx_i)=a$ for
  $i=1,\ldots, \ell_0\leq \ell$ and $\CA_H(\bx_i)>a$ for the remaining
  orbits. Then
  $$
  \PS\colon
  \sum_{i=1}^\ell  \bx_i\mapsto \sum_{i=1}^{\ell_0}
  \hh^{m_i}\bx^2_i+\ldots ,
  $$
  where $\bx^2_i\in \bPP_2(\varphi)$ is the second iterate of $\bx_i$
  and
  \begin{equation}
    \label{eq:h-deg}
    m_i=2\mu(\bx_i)-\mu\big(\bx^2_i\big)+n   
  \end{equation} 
  and the dots stand for a sum of capped orbits with action strictly
  greater than $2a$.
\end{Proposition}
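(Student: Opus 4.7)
The strategy is to analyze the chain-level image $\PS(\bx_i) = \wp(\bx_i \otimes \bx_i)$ simultaneously under the action and $\hh$-adic filtrations, isolating the zero-energy contribution. Expanding $\wp = \sum_{k \geq 0} \hh^k \wp_k$ on the chain level, with $\wp_0$ the ordinary pair-of-pants product and each $\wp_k$ counting $\Z_2$-parametrized pair-of-pants solutions over a Morse $k$-cell of $S^\infty$ as in Seidel's construction in \cite[Sect.\ 4--6]{Se}, one observes that the only solution of the pair-of-pants Floer equation with both inputs equal to $\bx$ and energy zero is the tautological constant at the loop $x(t)$, whose output is the second iterate $\bx^2$ at action precisely $2\CA_H(\bx)$. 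Any non-constant solution has strictly positive energy, and the standard energy-action identity forces its output to have action strictly greater than $2\CA_H(\bx)$. The same conclusion applies to every contribution originating from $\bx_i$ with $i>\ell_0$, for which $\CA_H(\bx_i)>a$ and therefore every output lies at action strictly greater than $2a$.

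Next, I would determine at which $\hh$-order the constant solution at $\bx_i$ contributes. It lifts to a trivial $\Z_2$-equivariant family over the universal bundle $S^\infty$; after a generic equivariant perturbation of the Floer data over the parameter space, the parametrized moduli space of such solutions becomes rigid precisely over an $m_i$-cell in the standard cell decomposition of $S^\infty$, producing the contribution $\hh^{m_i}\bx_i^2$ to $\wp(\bx_i \otimes \bx_i)$. The exponent is pinned down by cohomological grading: since $\PS$ shifts degree by $n$, one must have $m_i+\mu\big(\bx_i^2\big)=2\mu(\bx_i)+n$, which is precisely \eqref{eq:h-deg}. Non-vanishing of the mod $2$ count at $\hh$-order $m_i$ follows from Seidel's automatic transversality for the constant pair-of-pants configuration combined with an explicit Morse-theoretic count over $\RP^\infty$, ensuring the coefficient is $1$ rather than $0$.

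The main obstacle, and the heart of the argument in \cite[Sect.\ 6]{Se}, is producing a $\Z_2$-equivariant perturbation for which the constant solution at $\bx_i$ is simultaneously preserved and transversely cut out in the parametrized problem, so that the count at $\hh$-order $m_i$ is genuinely odd rather than accidentally cancelling. Once this regularity and non-vanishing statement is in place, the grading calculation fixes the exponent $m_i$, and the energy-action inequality sweeps every non-constant and every $i>\ell_0$ contribution into the higher-action error term, yielding the stated form of $\PS\big(\sum_i \bx_i\big)$.
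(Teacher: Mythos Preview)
The paper does not give its own proof of this proposition: immediately after the statement it explains that this is the equivariant analogue of automatic regularity for constant solutions, notes that ``the step from a non-equivariant to equivariant setting is non-trivial,'' and then explicitly refers the reader to \cite{Se} for the proof (with \cite{ShZ} mentioned as an alternative). So there is no in-paper argument to compare against.

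Your outline is consistent with the paper's informal discussion and with the structure of Seidel's original argument: isolate the zero-energy (constant) solution via the energy--action identity, read off the $\hh$-exponent from grading, and invoke the equivariant transversality from \cite[Sect.\ 6]{Se} to ensure the count at that $\hh$-order is odd. You have correctly identified the genuine content as the equivariant regularity step, which the paper also flags as the non-trivial part. One small caveat: your energy--action reasoning tacitly assumes the inhomogeneous perturbation is absent or small enough not to disturb the filtration; the paper addresses exactly this point in Remark~\ref{rmk:reg}, so you may want to acknowledge it.
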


For instance, $\PS(\bx)=\hh^m\bx^2+\ldots$, where
$m=2\mu(\bx)-\mu(\bx^2)+n$ and the remaining terms have action higher
than that of $\bx^2$. In particular, $\bx^2$ with some power of $\hh$
is necessarily present in $\PS(\bx)$.

Proposition \ref{prop:PS} is an equivariant analogue of the standard
fact that a constant solution of the Floer or Cauchy--Riemann equation
is automatically regular whenever the relative index of the solution
is zero, which in turn is a consequence of that the kernel of the
linearized operator at the constant solution with suitable boundary
conditions is trivial; see, e.g., \cite[Lemma 3.1]{CGG}, \cite[Lemma
6.7.6]{MS}, \cite[p.\ 971]{Se} and \cite[Sect.\ 2.7]{Sa}. However, the
step from a non-equivariant to equivariant setting is non-trivial. We
refer the reader to \cite{Se} for the proof; see also \cite{ShZ} for
an alternative approach and generalizations.

\begin{Remark}[Borel's localization theorem according to \cite{Se}]
  \label{rmk:isomorphism}
  As has been mentioned above, one consequence of Proposition
  \ref{prop:PS} is \cite[Thm.\ 1.3]{Se} asserting, in particular, that
  for a symplectically aspherical manifold the equivariant
  pair-of-pants product $\mathlarger{\wp}$ in the filtered Floer
  cohomology (i.e., the homology of the Floer complex restricted to an
  action interval) becomes an isomorphism after tensoring with
  $\F_2((\hh))$. (The theorem follows from the proposition via
  applying the spectral sequence comparison theorem to the action
  filtration spectral sequence.) Since $\CS$ is an isomorphism modulo
  $\hh$-torsion for purely algebraic reasons (see Remark
  \ref{rmk:alg}), this yields that the map $\PS$ is also an
  isomorphism, as well as the variants of Borel's localization and
  Smith's inequality in the filtered Floer cohomology.

  On the other hand, while for a closed symplectic manifold $M$ the
  analogues of Borel's localization and Smith's inequality hold
  trivially in the global Floer cohomology, the filtered version of
  Borel's localization (in the most naive form) fails without the
  assumption that $(M,\omega)$ is symplectically aspherical; see,
  however, \cite{Sh:HZ}. Moreover, $\PS$ need not be an isomorphism
  even globally without this assumption. In fact, as Example
  \ref{ex:S2} shows, $\PS$ is not an epimorphism already for $M=S^2$.
\end{Remark}  

\begin{Remark}[Regularity]
  \label{rmk:reg}
  To ensure that the regularity condition is satisfied for the
  equivariant pair-of-pants product, a certain arbitrarily small
  ``inhomogeneous perturbation'', i.e., an $s$-dependent perturbation
  of the Hamiltonian, is introduced to the Floer equation in
  \cite{Se}. This perturbation is compactly supported in $s$ and thus
  does not affect the initial and terminal Hamiltonians and the
  actions. However, it does affect the relation between the energy of
  a pair-of-pants curve and the action difference. As a consequence,
  the equivariant pair-of-pants product $\mathlarger{\wp}$ is now
  action increasing only up to an $\eps$-error, which goes to zero
  with the size of the perturbation.  Therefore, $\PS$ also preserves
  the action filtration only up to an $\eps$-error. In particular, if
  all fixed points of $\varphi$ have distinct actions and $\eps>0$ is
  small, $\PS$ literally preserves the action filtration. This would
  already be sufficient for our purposes; see Remark
  \ref{rmk:pert}. However, since the pair-of-pants curves connecting
  different orbits must have energy \emph{a priori} bounded away from
  zero (cf.\ \cite[Prop.\ 2.2]{GG:Rev}), the map $\PS$ always
  preserves the action filtration when the inhomogeneous perturbation
  is small enough, and Proposition \ref{prop:PS} holds as stated.
\end{Remark}

\subsection{Quantum Steenrod square}
\label{sec:QS}
The counterpart of the map $\PS$ on the side of the quantum cohomology
is the \emph{quantum Steenrod square} $\QS$. This quantum cohomology
operation is studied in detail in \cite{Wi1, Wi2}, but the first
Morse/Floer theoretic descriptions of the Steenrod squares go back to
\cite{Be, BC, Fu}. Throughout this section it is essential that the
manifold $M$ is closed.

Following \cite{Se, Wi1, Wi2} and slightly changing the usual
notation, let us define the \emph{Steenrod square} as the degree
doubling linear map
\begin{equation}
  \label{eq:sq1}
\Sq\colon \H^*(M)\to \H^*(M)[[\hh]], \quad
\Sq(\alpha)=\sum_{i=0}^{|\alpha|}
\hh^{|\alpha|-i}\Sq^i(\alpha),
\end{equation}
where $\Sq^i$ are the standard Steenrod squares. In particular,
$|\Sq^i(\alpha)|=|\alpha|+i$, and $\Sq^0(\alpha)=\alpha$ and
$\Sq^{|\alpha|}(\alpha)=\alpha\cup\alpha$. For instance, for the
generator $\varpi$ of $\H^{2n}(M^{2n})$,
\begin{equation}
  \label{eq:Sq(M)}
\Sq(\varpi)=\hh^{2n}\varpi.
\end{equation}

The \emph{quantum Steenrod square} is a degree doubling map
$$
\QS\colon \H^*(M)\to \HQ^*(M)[[\hh]],
$$
which is a certain deformation of $\Sq$ in $\qq$:
\begin{equation}
    \label{eq:QS(M)}
\QS(\alpha)=\Sq(\alpha)+O(\qq)
\end{equation}
for $\alpha\in \H^*(M)$. For instance,
$$
\QS(\varpi)=\hh^{2n}\varpi+O(\qq),
$$
and $\QS$ is undeformed at $\varpi\in \H^{2n}(M)$ if and only if the
higher order terms in $\qq$ vanish.

It is convenient to formally extend $\Sq$ and $\QS$ to the maps
$$
  \Sq\colon \HQ^*(M)[[\hh]]
  \to
  \HQ^*(M)[[\hh]]
$$
and
$$
\QS\colon \HQ^*(M)[[\hh]]\to \HQ^*(M)[[\hh]],
$$
which are linear over $\F_2[[h]]$ and homogeneous of degree two in
$\qq$. More precisely, for instance, we set
$\QS(\hh\alpha)=\hh\QS(\alpha)$ and
$\QS(\qq \alpha)=\qq^2 \QS(\alpha)$. Since the extension is linear
over $\F_2[[h]]$, the maps are no longer degree doubling. In what
follows, unless stated otherwise, $\Sq$ and $\QS$ we will refer to the
extended maps above. Note that $\QS$ is still a deformation of $\Sq$
in $\qq$ in the sense of \eqref{eq:QS(M)}.

The next ingredient we need is the equivariant continuation/PSS map
introduced in \cite{Wi2}. This is the map $\Phi_{\eq}$ from the
$\Z_2$-equivariant cohomology of $\HQ^*(M)$ with trivial action to the
equivariant Floer cohomology of $\varphi^2$:
$$
\Phi_{\eq}\colon \HQ^*_{\eq}(M)\to \HF^*_{\eq}\big(\varphi^2\big)[n].
$$
Since the $\Z_2$-action in the cohomology is trivial, we have
$$
\HQ^*_{\eq}(M)=\HQ^*(M)[[\hh]]
$$
and one can also think of the cohomology on the left hand side as the
target space of $\QS$.

Just as an ordinary continuation/PSS map $\Phi$, its equivariant
counterpart $\Phi_{\eq}$ is an $\F_2[[\hh]]$-linear isomorphism. As a
consequence, $\HF^*_{\eq}\big(\varphi^2\big)$ has no $\hh$-torsion and
the map $\PS$ is linear.

The spaces and maps we have introduced fit together into the following
commutative diagram, where we again suppressed the shifts of degree by
the continuation/PSS maps:
\begin{equation}
\label{eq:diag1}
  \xymatrix
  {
\HQ^*(M)[[\hh]]\ar[d]_{{\QS}} \ar[r]^{{\Phi}}_{{\cong}}
&
\HF^*(\varphi)[[\hh]] \ar[d]^{\PS}
\\
\HQ^*_{\eq}(M) \ar[d]_{\cong} \ar[r]^{\Phi_{\eq}}_{{\cong}}
&
\HF_{\eq}^*\big(\varphi^2\big) 
\ar[d]^{F}
\\
\HQ^*(M)[[\hh]] \ar[r]^{\Phi}_{{\cong}}
&
\HF^*\big(\varphi^2\big)[[\hh]]
}
\end{equation}
We emphasize that here the continuation/PSS maps $\Phi$ for $\varphi$
and $\varphi^2$, and $\Phi_{\eq}$ (i.e., the horizontal arrows) are
$\F_2[[\hh]]$-linear isomorphisms. The requirement that the top square
is commutative can be viewed as the definition of $\QS$,
\cite{Wi2}. Likewise, the condition that the bottom square is
commutative is the definition of $F$, i.e.,
$$
F=\Phi\Phi^{-1}_{\eq}.
$$
It is worth pointing out that in general the map $F$ need not preserve
the action filtration.

\begin{Example}
  \label{ex:S2}
  Let $M=S^2$. Then $\QS(1)=1$ and
  $\QS(\varpi)=\hh^2\varpi+\qq\cdot 1$, where $1$ is the generator of
  $\H^0(S^2)$; see \cite{Wi1}. Here $\qq\cdot 1$ is the deformation
  term, which is equal to the quantum square of $\varpi$. Continuing
  the discussion from Remark \ref{rmk:isomorphism} and making $\hh$
  invertible, it is now easy to see that neither $\varpi$ nor
  $\qq\cdot 1$ nor $\qq \varpi$ is in the image of
  \[
    \QS\colon \HQ^*(S^2)((\hh))\to\HQ^*_{\eq}(S^2)
    \otimes_{\F_2[[\hh]]}\F_2((\hh))\cong \HQ^*(S^2)
    \otimes_{\Lambda}\Lambda((\hh)).
  \]
  As a consequence of the diagram \eqref{eq:diag1},
  \[
    \PS\colon \HF^*(\varphi)((\hh))\to
    \HF^*_{\eq}(\varphi^2)\otimes_{\F_2[[\hh]]}\F_2((\hh))
  \]
  is not onto for any Hamiltonian diffeomorphism
  $\varphi\colon S^2\to S^2$. Note that at the same time linearly
  extending the classical Steenrod square map \eqref{eq:sq1} over
  $\F_2((\hh))$ gives a linear isomorphism
  $\H^*(M)((\hh))\to \H^*(M)((\hh))$ for any closed manifold $M$.
\end{Example}
    
Finally, as has been mentioned above, the complex
$\CF^*_{\eq}\big(\varphi^2\big)$ carries the $\hh$-adic
filtration:
  $$
  \CF^*_{\eq}\big(\varphi^2\big)\supset \hh
  \CF^*_{\eq}\big(\varphi^2\big)
  \supset \hh^2 \CF^*_{\eq}\big(\varphi^2\big)\supset \ldots .
  $$
  The associated spectral sequence (the Leray spectral sequence in the
  equivariant cohomology) converges to the graded space $E_\infty$
  associated with the $\hh$-adic filtration of
  $\HF^*_{\eq}\big(\varphi^2\big)$. It readily follows from the fact
  that $\Phi_{\eq}$ is an isomorphism that this spectral sequence
  collapses on the $E_1$-page:
  $E_1=\HF^*\big(\varphi^2\big)[[\hh]]=E_\infty$. Indeed, the fact
  that $E_1$ in every bi-degree has the same dimension over $\F_2$ as
  $E_\infty$ forces all higher order differentials to
  vanish. Alternatively, we can view $\CF_{\eq}^*\big(\varphi^2\big)$
  as an ungraded complex over $\Lambda$ with $\hh$-adic
  filtration. Then $E_1$ is finite-dimensional over $\Lambda$ in every
  degree with $\dim_\Lambda E_1^*=\dim_\Lambda E_\infty^*$, which
  again implies that the spectral sequence collapses in the
  $E_1$-term. With this in mind we can view
  $\HF^*\big(\varphi^2\big)[[\hh]]$ as the graded space associated
  with the $\hh$-adic filtration on $\HF^*_{\eq}\big(\varphi^2\big)$.

  \begin{Remark}
    \label{rmk:graded}
    It is essential to acknowledge an abuse of terminology in the
    paragraph above and in what follows. (We are grateful to the
    referee for pointing it out.) Strictly speaking, since the
    $\hh$-adic filtration is infinite, to keep the isomorphism between
    $E_\infty$ and $\HF_{\eq}^*\big(\varphi^2\big)$ we would need to
    define the $E_\infty$-page of the spectral sequence as the direct
    product of individual degree terms rather than the direct
    sum. Then $E_\infty$ becomes a filtered (rather than graded)
    module which is isomorphic to $\HF_{\eq}^*\big(\varphi^2\big)$ as
    a filtered module, but not to the graded module resulting from the
    $\hh$-adic filtration of $\HF_{\eq}^*\big(\varphi^2\big)$. The
    problem here stems from the difference between the direct product
    and the direct sum; e.g., $\F_2[[h]]$ is the product of its fixed
    degree terms and not a graded ring, whereas $\F_2[h]$ is the
    direct sum and graded. Below we will ignore this issue, which is
    not uncommon in symplectic topology literature, for it is
    essentially of terminological nature. Furthermore, the problem can
    be completely avoided as explained in the next remark.
  \end{Remark}
  
  \begin{Remark}
\label{rmk:CFeq}
In \eqref{eq:CFeq}, we defined $\CF_{\eq}^*\big(\varphi^2\big)$ as the
space of formal power series with coefficients in
$\CF^*\big(\varphi^2\big)$. There are, however, other choices of
$\CF_{\eq}^*\big(\varphi^2\big)$ although the difference is purely
technical, cf.\ \cite{Se, Wi1, Wi2}. For instance, by examining the
action/index change one can readily see that for a closed monotone
symplectic manifold, the expansion of $d_{\eq}$ involves only a finite
number of non-zero terms. Thus the equivariant Floer homology can be
defined over $\F_2[\hh]$ as $\CF^*\big(\varphi^2\big)[\hh]$. In a
similar vein, the product $\PS$, the quantum Steenrod square $\QS$,
and the continuation/PSS maps $\Phi_{\eq}$ and $\Phi_{\eq}^{-1}$ are
also polynomial in $\hh$. (Note that as a consequence, $F$ is also
polynomial in $\hh$, which is \emph{a priori} non-obvious.)
Therefore, in all constructions throughout the paper one can replace
formal power series in $\hh$ by polynomials, avoiding the
terminological issue pointed out in Remark \ref{rmk:graded}.  Finally,
one can choose a middle way and replace the space of formal power
series by the tensor product with $\F_2[[\hh]]$ over $\F_2$, e.g.,
setting
$\CF_{\eq}^*\big(\varphi^2\big) =
\CF^*\big(\varphi^2\big)\otimes_{\F_2}\F_2[[\hh]]$.
  \end{Remark}

We will need the following simple observation:

\begin{Lemma}
  \label{lemma:Phi}
  For any Hamiltonian diffeomorphism $\varphi$, the equivariant
  continuation map $\Phi_{\eq}$ induces the ordinary continuation map
  $\Phi$ on the graded vector space
  $E_\infty=\HF^*\big(\varphi^2\big)[[\hh]]$, i.e.,
\begin{equation}
  \label{eq:Phi}
\Phi_{\eq}=\Phi+O(h).
\end{equation}
\end{Lemma}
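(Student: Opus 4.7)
The plan is to read off \eqref{eq:Phi} directly from the chain-level construction of the equivariant continuation/PSS map. Following the same $\Z_2$-equivariant scheme used to define $d_{\eq}$ in \cite[Sect.\ 4.2]{Se} and \cite{Wi2}, $\Phi_{\eq}$ is realized at the chain level by counting parametrized Floer continuation (and PSS) solutions whose data vary along the gradient flow of a standard $\Z_2$-invariant Morse function on $S^\infty$, with critical points in each non-negative degree. Sorting contributions by the index of the critical point reached yields an expansion
\[
\Phi_{\eq} = \Phi_0 + \hh \Phi_1 + \hh^2 \Phi_2 + \cdots ,
\]
which, by construction, preserves the $\hh$-adic filtration.

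The key step is to identify $\Phi_0$ with the ordinary continuation/PSS map $\Phi$ on the chain level. Reducing modulo $\hh$ collapses both equivariant complexes back to their non-equivariant underlying complexes (since $d_{\eq} \equiv d_{\Fl} \pmod{\hh}$), and the induced quotient chain map is precisely $\Phi_0$. The $j=0$ contribution to $\Phi_{\eq}$, however, comes from parametrized solutions lying entirely over the index-$0$ critical locus of the Morse function on $S^\infty$, where, by the standard choice of data in the cited references, the parametrized equation degenerates to the ordinary (non-parametrized) Floer continuation equation defining $\Phi$. Hence $\Phi_0 = \Phi$ at the chain level, and passing to cohomology then gives \eqref{eq:Phi}; equivalently, one can read the statement off the $E_\infty$-page of the $\hh$-adic spectral sequence on $\HF^*_{\eq}(\varphi^2)$, which collapses at $E_1$ by the discussion preceding the lemma.

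The only non-trivial but essentially routine point in this plan is the compatibility between the parametrized Floer/PSS data at the index-$0$ critical locus of the auxiliary Morse function on $S^\infty$ and the non-equivariant data chosen to define $\Phi$. This is the standard setup in \cite{Se, Wi2}, and once it is in place the identification $\Phi_0 = \Phi$ is tautological, completing the proof.
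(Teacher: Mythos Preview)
Your proposal is correct and follows essentially the same approach as the paper. The paper's proof differs only cosmetically: it factors $\Phi_{\eq}$ as a composition $C_{\eq}\circ\Psi_{\eq}$ of an equivariant PSS map (for a small Morse function) and an equivariant continuation map, and then observes that each factor satisfies $C_{\eq}=C+O(\hh)$ and $\Psi_{\eq}=\Psi+O(\hh)$ ``by definition'' from \cite{Wi2}; your argument makes the same observation directly for $\Phi_{\eq}$ without passing through the factorization.
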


\begin{proof}
  Let $f$ be a $C^2$-small Morse function on $(M, \omega)$, unrelated
  to the pseudo-rotation $\varphi$. Set
  $\QC^*(M) = \CMo^*(f) \otimes \Lambda$ where $\CMo^*(f)$ is the
  Morse complex of $f$. The complex $\QC^*(M)[[\hh]]$ has a natural
  $\hh$-adic filtration and the resulting spectral sequence collapses
  on the $E_1$-page; for $\hh$ is not involved in the
  differential. Furthermore, recall that
  $\HF^*\big(\varphi\big)[[\hh]]$ is the $E_1$-page associated with
  the $\hh$-adic filtration of $\CF^*_{\eq}\big(\varphi^2\big)$. We
  claim that, in the obvious notation,
\begin{equation}
  \label{eq:E1}
E_1(\Phi_{\eq})=\Phi.
\end{equation}
Then, since both spectral sequences collapse on the $E_1$-page,
\eqref{eq:Phi} readily follows from \eqref{eq:E1}.

It remains to establish \eqref{eq:E1}.  Let $\psi_f$ be the
Hamiltonian diffeomorphism generated by $f$. Following \cite{Wi2}, we
write the chain level definitions of $\Phi_{\eq}$ and $\Phi$ as the
compositions
\[
\QC^*(M) [[\hh]] \stackrel{\Psi_{\eq}}{\longrightarrow}
\CF^*\big(\psi^2_f\big)[[\hh]]  \stackrel{C_{\eq}}{\longrightarrow}
\CF^*\big(\varphi^2\big)[[\hh]]=\CF^*_{\eq}\big(\varphi^2\big)
\]
and
\[
\QC^*(M) [[\hh]]  \stackrel{\Psi}{\longrightarrow}
\CF^*\big(\psi^2_f\big)[[\hh]]  \stackrel{C}{\longrightarrow}
\CF^*\big(\varphi^2\big)[[\hh]].
\]
Here the map $\Psi$ is the PSS map for $f$ or, to be more precise, for
$2f$. Furthermore, $C$ is the continuation from the Floer complex of
$\varphi^2$ to the Floer complex of $\psi_f^2$ for a fixed (e.g.,
linear) homotopy between $f$ and the Hamiltonian $H$ generating
$\varphi$. The maps $\Psi_{\eq}$ and $C_{\eq}$ are the equivariant
counterparts of $\Psi$ and, respectively, $C$. (Note that the
differential in the Morse complex of $f$, and hence in the Floer
complex of $\psi_f^2$, might be non-trivial; for $M$ need not admit a
perfect Morse function.)  By definition (see \cite{Wi2}),
$\Psi_{eq}=\Psi + O(\hh)$ and $C_{\eq}=C+O(\hh)$, and \eqref{eq:E1}
follows.
\end{proof}

\subsection{Enter pseudo-rotations} Assume now that $\varphi$ is a
pseudo-rotation or more generally that every 2-periodic point is a
fixed point and that $d_{\Fl}=0$ for $\varphi$ and hence for
$\varphi^2$. Then $\HF^*(\varphi)=\CF^*(\varphi)$ and
$\HF^*\big(\varphi^2\big)=\CF^*\big(\varphi^2\big)$.

Furthermore, from the collapse of the Leray spectral sequence it then
follows inductively that $d_{\eq}=0$ and we have the identifications
\begin{equation}
  \label{eq:F0}
  F_0 \colon
  \HF^*_{\eq}\big(\varphi^2\big)=\CF^*_{\eq}\big(\varphi^2\big)
  = \CF^*\big(\varphi^2\big)[[\hh]],
\end{equation}
which, in contrast with the natural map $F$, are specific to the case
of pseudo-rotations, but might exist under somewhat less restrictive
conditions. Of course, $F_0=\id$, but we prefer to use a different
notation at this point to emphasize the fact that $F_0$ is defined
only under some additional assumptions on $\varphi$ and $\varphi^2$.

The next important (but simple) ingredient of our proof, which we use
to establish \eqref{eq:bx2l} below, is the following lemma.

\begin{Lemma}
  \label{lemma:F0vsF}
We have $F=F_0+O(\hh)$.
\end{Lemma}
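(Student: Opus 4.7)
The strategy is to reduce the statement to Lemma \ref{lemma:Phi} via a diagram chase in the bottom square of \eqref{eq:diag1}. By construction of $F$, that square commutes, so
\[
F\circ\Phi_{\eq}=\Phi
\]
as maps $\HQ^*(M)[[\hh]]\to\HF^*\big(\varphi^2\big)[[\hh]]$. Under the pseudo-rotation assumption we additionally have the tautological identification
\[
F_0\colon\HF^*_{\eq}\big(\varphi^2\big)=\CF^*_{\eq}\big(\varphi^2\big)=\CF^*\big(\varphi^2\big)[[\hh]]=\HF^*\big(\varphi^2\big)[[\hh]],
\]
coming from the collapse of the Leray spectral sequence and the vanishing of the Floer differential. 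Thus both $F$ and $F_0$ are $\F_2[[\hh]]$-linear maps between the same source and target, and the claim $F=F_0+O(\hh)$ is shorthand for the assertion that $F$ and $F_0$ induce the same map on the quotient by $\hh$, i.e.\ on $\HF^*\big(\varphi^2\big)$.

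The plan is then the following. First, I would invoke Lemma \ref{lemma:Phi} to rewrite $\Phi_{\eq}=\Phi+O(\hh)$; under the pseudo-rotation assumption the $E_1=E_\infty$ page coincides with $\HF^*\big(\varphi^2\big)[[\hh]]$ itself, so this equality can be read verbatim after applying $F_0$ to the target. Concretely, it says that
\[
F_0\circ\Phi_{\eq}\equiv\Phi\pmod{\hh}.
\]
Combining this with the defining relation $F\circ\Phi_{\eq}=\Phi$ yields
\[
(F-F_0)\circ\Phi_{\eq}\equiv 0\pmod{\hh}.
\]
Since $\Phi_{\eq}$ is an $\F_2[[\hh]]$-linear isomorphism, it remains surjective modulo $\hh$, whence $F\equiv F_0\pmod{\hh}$, which is the desired statement.

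The only subtlety worth flagging is the need for the pseudo-rotation hypothesis in order to make sense of $F_0$ at all: without $d_{\eq}=0$ one has no canonical splitting of the $\hh$-adic filtration of $\HF^*_{\eq}\big(\varphi^2\big)$ onto the associated graded, and the comparison between $F$ and an identity-like map is not even formulated. Once $F_0$ is in play the entire argument is essentially formal, with Lemma \ref{lemma:Phi} providing the only nontrivial input; I do not expect any real obstacle beyond keeping track of the three sources of $\hh$-variables (coming from $\Phi_{\eq}$, $\Phi$, and $F_0$) and verifying that the identification $F_0$ is compatible with the $\hh$-adic filtrations on both sides, which it is tautologically.
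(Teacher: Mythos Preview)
Your proposal is correct and follows exactly the approach the paper takes: the paper's proof is a single sentence stating that the lemma is an immediate consequence of Lemma~\ref{lemma:Phi} and the identifications~\eqref{eq:F0}, and you have simply unpacked that sentence into the explicit diagram chase $(F-F_0)\circ\Phi_{\eq}\equiv 0\pmod{\hh}$ together with the surjectivity of $\Phi_{\eq}$ modulo $\hh$.
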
 

This lemma is an immediate consequence of Lemma \ref{lemma:Phi} and
the identifications~\eqref{eq:F0}.

\begin{Example}
  \label{Ex:F}
  Returning to Example \ref{ex:S2} consider the rotation $\varphi$ of
  $S^2$ about the $z$-axis in an angle $\theta$. Let $\bx$ and $\by$
  be the South and North poles respectively, equipped with trivial
  cappings. Thus $\mu(\bx)=-1$ and $\mu(\by)=1$. Passing to the second
  iterate $\varphi^2$, we still have $\mu\big(\bx^2\big)=-1$ and
  $\mu\big(\bx^2\big)=1$ when $\theta\in (0,\,\pi)$.  Then
  $\Phi_{\eq}(1)=\bx^2=\Phi(1)$ and
  $\Phi_{\eq}(\varpi)=\by^2=\Phi(\varpi)$ and $F=\id$. In general,
  $F=\id$ whenever $\varphi$ is sufficiently close to $\id$. On the
  other hand, if $\theta\in (\pi,\,2\pi)$ we have
  $\mu\big(\bx^2\big)=-3$ and $\mu\big(\by^2\big)=3$. Then
  $\Phi_{\eq}(1)=\qq^{-1}\by^2+\hh^2\bx^2$, while
  $\Phi(1)= \qq^{-1}\by^2$, and
  $\Phi_{\eq}(\varpi)=\qq\bx^2=\Phi(\varpi)$. (This can be proved by
  using the information about $\QS$ from Example \ref{ex:S2} along
  with index/action analysis and Lemma \ref{lemma:F0vsF}.) As a
  consequence, $F\big(\bx^2\big)=\bx^2$ and
  $F\big(\by^2\big)=\by^2+\hh^2\qq\bx^2$. We are not aware of any
  general method of explicitly calculating the map $F$.
\end{Example}

\section{Proof of the main theorem}
\label{sec:proof}
With all preparations in place, we are now ready to prove the main
result of the paper, Theorem \ref{thm:main}.  For the sake of
simplicity, we will assume that all capped periodic orbits have
distinct action: the argument extends to the general case in a
straightforward way and the difference is purely expository. (See also
Remark \ref{rmk:pert}.)

We will argue by contradiction: throughout the proof we assume that
$\QS(\varpi)$ is undeformed, i.e., $\QS(\varpi)=\hh^{2n}\varpi$; see
\eqref{eq:Sq(M)} and \eqref{eq:QS(M)}. The proof comprises two steps,
and this assumption, which we aim to disprove, is used in both steps.

Write
\begin{equation}
  \label{eq:Phi(M)}
\Phi(\varpi)=\bx+\ldots,
\end{equation}
where the dots stand for the terms with action strictly greater than
the action of $\bx$. Thus $\mu(\bx)=n$. In the first step we prove the
theorem under the additional condition that the index of $\bx$ jumps
from $\bx$ to $\bx^2$, i.e.,
\begin{equation}
  \label{eq:index}
  \mu\big(\bx^2\big)>\mu(\bx)=n.
\end{equation}

This condition might or might not hold for $\varphi$ and in the second
step we show that \eqref{eq:index} is necessarily satisfied for a
sufficiently high iterate of $\varphi$. This will complete the proof
of the theorem. 

\emph{Step 1.}  Thus let us prove the theorem under the additional
condition \eqref{eq:index}, where $\bx$ is given by \eqref{eq:Phi(M)}.
From the top square in the diagram \eqref{eq:diag1} and Proposition
\ref{prop:PS}, we obtain the following commutative square:
\begin{equation*}
\xymatrix
{
  \varpi \ar[d]_{{\QS}} \ar[r]^{{\Phi}} 
  & 
  \bx+ \ldots \ar[d]^{\PS}
  \\
  \hh^{2n}\varpi \ar[r]^{\Phi_{\eq}}
  & 
  \hh^m\bx^2+\ldots\\
 }
\end{equation*}
In the bottom right corner the dots  again stand for higher action
terms and, by \eqref{eq:h-deg} and \eqref{eq:index},
$$
m=3n-\mu\big(\bx^2\big) <2n.
$$
This contradicts the fact that $\Phi_{\eq}$ is $\F_2[[\hh]]$-linear.

\emph{Step 2.} To finish the proof it remains to make sure that
\eqref{eq:index} is satisfied after, if necessary, replacing $\varphi$
by its iterate.

The condition that $\mu(\bx)=n$ guarantees that, by
\eqref{eq:mean-cz}, $\hmu(\bx)>0$ and hence
$\mu\big(\bx^k\big)\to\infty$, and furthermore that the sequence
$\mu\big(\bx^k\big)$ is increasing (but not necessarily strictly
increasing):
\begin{equation}
  \label{eq:increasing}
  \mu\big(\bx^k\big)\nearrow\infty .
\end{equation}

There are several ways to show this. For instance, let us adopt the
argument from \cite[Sect.\ 4]{GG:PRvR}; see also \cite[Formula
(6.1)]{GG:PRvR}. Namely, let $P\in \tSp(2n)$ be the linearized flow
along $\bx$. Since the index sequence $\mu\big(P^k\big)$ is invariant
under iso-spectral deformations, we can assume without loss of
generality that $P(1)$ is semi-simple. Then $P$ can be expressed as
the product of a loop $\phi$ and $P\in\tSp(2n)$ which decomposes as a
direct sum of elements of $\tSp(2)$ or $\tSp(4)$ of the following
three types: short rotations of $\R^2$ (by an angle
$\theta\in (-\pi,\pi)$), positive and complex hyperbolic
transformations of $\R^2$ or $\R^4$ with zero index, and negative
hyperbolic transformations of $\R^2$. (A negative hyperbolic
transformation is the counterclockwise rotation in $\pi$ composed with
a positive hyperbolic transformation with zero index.) Then, using the
condition that $\mu(P)=n$, we can redistribute the loop part $\phi$
among individual terms and write $P$ as $\bigoplus P_i$, where $P_i$
is either a counterclockwise rotation by $\theta\in (0,2\pi)$ or a
counterclockwise negative hyperbolic transformation. Clearly, each of
the sequences $\mu\big(P_i^k\big)$ is increasing, and hence so is
$\mu\big(P^k\big)$.

As a consequence of \eqref{eq:increasing}, there exists
$r=2^{\ell_0}\geq 1$ such that $\mu\big(\bx^k\big)=n$ for $k\leq r$
but $\mu\big(\bx^{2r}\big)>n$.

We claim that
\begin{equation}
  \label{eq:bx2l}
\Phi(\varpi)=\bx^{2^\ell}+\ldots
\end{equation}
as long as $\ell\leq \ell_0$, where the dots stand again for the terms
of higher action. In particular, we can replace $\varphi$ by
$\varphi^r$ to guarantee that \eqref{eq:index} is satisfied.

To prove \eqref{eq:bx2l}, arguing by induction, it is enough to show
that \eqref{eq:Phi(M)} still holds for $\varphi^2$, i.e.,
\begin{equation}
  \label{eq:bx2}
\Phi(\varpi)=\bx^2+\ldots, 
\end{equation}
provided, of course, that it holds for $\varphi$ and
$\mu\big(\bx^2\big)=n=\mu(\bx)$.

To establish \eqref{eq:bx2}, let us trace the image of $\varpi$
through the diagram \eqref{eq:diag1}. We have
$$
  \xymatrix
  {
\varpi\ar[d]_{\QS}\ar[r]^{\Phi} 
&
\bx+\ldots \ar[d]^{\PS}
\\
\hh^{2n}\varpi \ar[d]_{\cong} \ar[r]^{\Phi_{\eq}}
&
\hh^{2n}\big(\bx^2+R\big)
\ar[d]^{F}
\\
\hh^{2n}\varpi\ar[r]^{\Phi}
&
\hh^{2n}\big(\bx^2+R'\big).
}
$$
We emphasize that in the left column of the diagram, we have used, as
in Step 1, the background assumption that $\QS(\varpi)$ is
undeformed. Next, let us take a closer look at what $R$ and $R'$ are.

The remainder $R$ in $\PS(\bx+\ldots)$ is a sum of capped 2-periodic
orbits of $\varphi$ with action strictly greater than the action of
$\bx^2$ and possibly $\hh$-dependent coefficients. The condition that
$$
\PS(\bx+\ldots)=\Phi_{\eq}\big(\hh^{2n}\varpi\big)
=\hh^{2n}\Phi_{\eq}(\varpi)
$$
guarantees that $\PS(\bx+\ldots)$ is divisible by $\hh^{2n}$. Let us
write
$$
R=\sum\bz_j+O(\hh),
$$
where $\bz_j$ are some capped 2-periodic orbits of $\varphi$ with
action strictly greater than the action of $\bx^2$.

By Lemma \ref{lemma:F0vsF}, $F\big(\bx^2\big)=\bx^2+O(\hh)$ and 
$$
R'=\sum \bz_j+O(\hh).
$$
However, $\Phi$ is a non-equivariant continuation/PSS map and thus
$\Phi(\varpi)\in\CF^*\big(\varphi^2\big)$. Therefore,
$R'\in\CF^*\big(\varphi^2\big)$ since
$$
\hh^{2n}\big(\bx^2+R'\big)=\hh^{2n}\Phi(\varpi).
$$
This proves \eqref{eq:bx2} and completes the proof of the
theorem. \hfill$\qed$

\begin{Remark}
  \label{rmk:pert}
  Returning to the regularity question (see Remark \ref{rmk:reg}),
  note that, the general case of the theorem can also be reduced to
  the case considered above where all periodic orbits of $\varphi$
  have distinct action. Indeed, it is clear from the proof that it
  suffices to assume that $\varphi$ is a pseudo-rotation up to a
  certain iteration order $r$, which is completely determined by the
  indices and the mean indices of 1-periodic orbits. Then the actions
  can be made distinct by an arbitrarily small perturbation of
  $\varphi$ keeping it a pseudo-rotation up to arbitrarily large
  iteration order. (A somewhat similar type of perturbation is used,
  for instance, in \cite[Sect.\ 3.2]{GG:Rev} in the proof of a Conley
  conjecture type result.)
\end{Remark}


\begin{thebibliography}{CKRTZ}
 
\bibitem[AK]{AK} D.V.  Anosov, A.B. Katok, New examples in smooth
ergodic theory. Ergodic diffeomorphisms, (in Russian), \emph{Trudy
Moskov.\ Mat.\ Ob\v{s}\v{c}.}, \textbf{23} (1970), 3--36.

\bibitem[Be]{Be} M. Betz, \emph{Categorical Constructions in Morse
    Theory and Cohomology Operations}, Ph.D. Thesis, Stanford
  University, 1993.
    
\bibitem[BC]{BC} M. Betz, R.L. Cohen, Graph moduli spaces and
  cohomology operations, \emph{Turkish J. Math.}, \textbf{18} (1994),
  23--41.
  % no. 1,
  
\bibitem[BO]{BO} F. Bourgeois, A. Oancea, The Gysin exact sequence for
  $S^1$-equivariant symplectic homology, \emph{J. Topol.\ Anal.},
  \textbf{5} (2013), 361--407. %no. 4
  
\bibitem[Br15a]{Br:Annals} B. Bramham, Periodic approximations of
irrational pseudo-rotations using pseudoholomorphic curves,
\emph{Ann.\ of Math.\ (2)}, \textbf{181} (2015), 1033--1086. %no. 3,

\bibitem[Br15b]{Br} B. Bramham, Pseudo-rotations with sufficiently
Liouvillean rotation number are $C^0$-rigid, \emph{Invent.\ Math.},
\textbf{199} (2015), 561--580. %no. 2
 
\bibitem[BH]{BH} B. Bramham, H. Hofer, First steps towards a
symplectic dynamics, \emph{Surv.\ Differ.\ Geom.}, \textbf{17}
(2012), 127--178.

\bibitem[\c Ci]{Ci} E. \c Cineli, Conley conjecture and local Floer
homology, \emph{Arch.\ Math.\ (Basel)}, \textbf{111} (2018),
647--656.  %no. 6,

\bibitem[\c CG]{CG} E. \c Cineli, V.L. Ginzburg, On the iterated
  Hamiltonian Floer homology, Preprint ArXiv:1902.06369; to appear in
  \emph{Comm.\ Contemp.\ Math.}
   
\bibitem[\c CGG]{CGG} E. \c Cineli, B.Z. G\"urel, V.L. Ginzburg,
  Pseudo-rotations and holomorphic curves, Preprint ArXiv:1905.07567.
  
\bibitem[FH]{FaHe} A. Fathi, M. Herman, Existence de dif\'eomorphismes
  minimaux, in \emph{ Dynamical systems, Vol.\ I -- Warsaw},
  Ast\'erisque, No.\ 49, Soc.\ Math.\ France, Paris, 1977,
  pp. 37--59,.

\bibitem[FK]{FK} B. Fayad, A. Katok, Constructions in elliptic
  dynamics, \emph{Ergodic Theory Dynam.\ Systems}, \textbf{24} (2004),
  1477--1520.

\bibitem[FM]{FM} J. Franks, M. Misiurewicz, Topological methods in
  dynamics. \emph{Handbook of dynamical systems}, Vol.\ 1A, 547--598,
  North-Holland, Amsterdam, 2002.

\bibitem[Fu]{Fu} K. Fukaya, Morse homotopy and its quantization, in
  \emph{Geometric topology (Athens, GA, 1993)}, AMS/IP Stud.\ Adv.\
  Math., 2.1, Amer.\ Math.\ Soc., Providence, RI, 1997, 409--440.
  
\bibitem[GG15]{GG:survey} V.L. Ginzburg, B.Z. G\"urel, The Conley
conjecture and beyond, \emph{Arnold Math.\ J.}, \textbf{1} (2015),
299--337.

\bibitem[GG17]{GG:Rev} V.L. Ginzburg, B.Z. G\"urel, Conley conjecture
  revisited, \emph{Int.\ Math.\ Res.\ Not.\ IMRN}, \textbf{2017}, doi:
  10.1093/imrn/rnx137.

\bibitem[GG18a]{GG:PR} V.L. Ginzburg, B.Z. G\"urel, Hamiltonian
pseudo-rotations of projective spaces, \emph{Invent.\ Math.},
\textbf{214} (2018), 1081--1130.

\bibitem[GG18b]{GG:PRvR} V.L. Ginzburg, B.Z. G\"urel, Pseudo-rotations
  vs.\ rotations, Preprint ArXiv:1812.05782.

\bibitem[GG19]{GG:convex} V.L. Ginzburg, B.Z. G\"urel,
  Lusternik--Schnirelmann theory and closed Reeb orbits, \emph{Math.\
    Z.}, 2019, doi:10.1007/s00209-019-02361-2.

\bibitem[He]{He} K. Hendricks, A spectral sequence of the Floer
  cohomology of symplectomorphisms of trivial polarization
  class, \emph{Int.\ Math.\ Res.\ Not.\ IMRN}, 2017, no.\ 2,
  509--528.
 
\bibitem[Hu]{Hu} M. Hutchings, Floer homology of families, I,
  \emph{Algebr.\ Geom.\ Topol.}, \textbf{8} (2008), 435–492. %no. 1,
  
\bibitem[LCY]{LCY} P. Le Calvez, J.-C. Yoccoz, Un th\'eor\`eme
  d'indice pour les hom\'eomorphismes du plan au voisinage d'un point
  fixe, \emph{Ann.\ of Math.\ (2)}, \textbf{146} (1997),
  241--293. %no. 2,
  
\bibitem[LRS]{LRS} F. Le Roux, S. Seyfaddini, work in progress.

\bibitem[Lo]{Lo} Y. Long, \emph{Index Theory for Symplectic Paths with
    Applications}, Birkh\"auser Verlag, Basel, 2002.
% % % % %Progress in Mathematics, 207.

\bibitem[McD]{McD} D. McDuff, Hamiltonian $S^1$-manifolds are
  uniruled, \emph{Duke Math.\ J.}, \textbf{146} (2009), 449--507.
  
\bibitem[MS]{MS} D. McDuff, D. Salamon, \emph{J-holomorphic Curves and
    Symplectic Topology}, Colloquium publications, vol.\ 52, AMS,
  Providence, RI, 2004.

\bibitem[Sa]{Sa} D.A. Salamon, Lectures on Floer homology, in
  \emph{Symplectic Geometry and Topology}, IAS/Park City Math.\ Ser.,
  vol.\ 7, Amer.\ Math.\ Soc., Providence, RI, 1999, 143--229.
% % % % Eds: Y. Eliashberg and L. Traynor.

\bibitem[SZ]{SZ} D. Salamon, E. Zehnder, Morse theory for periodic
  solutions of Hamiltonian systems and the Maslov index, \emph{Comm.\
    Pure Appl.\ Math.}, \textbf{45} (1992), 1303--1360.

\bibitem[Se]{Se} P. Seidel, The equivariant pair-of-pants
product in fixed point Floer cohomology, \emph{Geom.\ Funct.\
  Anal.}, \textbf{25} (2015), 942--1007.

\bibitem[SS]{SS} P. Seidel, I. Smith, Localization for involutions in
  Floer cohomology, \emph{Geom.\ Funct.\ Anal.}, \textbf{20} (2010),
  1464--1501.

\bibitem[SW]{SW} P. Seidel, N. Wilkins, work in progress.

\bibitem[Sh19a]{Sh:HZ} E. Shelukhin, On the Hofer--Zehnder conjecture,
  Preprint ArXiv:1905.04769.
   
\bibitem[Sh19b]{Sh} E. Shelukhin, Pseudorotations and Steenrod
  squares, Preprint ArXiv:1905.05108.

\bibitem[Sh19c]{Sh:new} E. Shelukhin, Pseudo-rotations and Steenrod
  squares revisited, Preprint ArXiv:1909.12315.

\bibitem[ShZa]{ShZ} E. Shelukhin, J. Zhao, The
  $\mathbb{Z}/(p)$-equivariant product-isomorphism in fixed point
  Floer cohomology, Preprint ArXiv:1905.03666.

\bibitem[Vi]{Vi} C. Viterbo, Functors and computations in Floer
  cohomology, I, \emph{Geom.\ Funct.\ Anal.}, \textbf{9} (1999),
  985--1033.
  
\bibitem[Wi18a]{Wi1} N. Wilkins, A construction of the quantum
  Steenrod squares and their algebraic relations, Preprint
  ArXiv:1805.02438; to appear in \emph{Geom.\ Topol.}

\bibitem[Wi18b]{Wi2} N. Wilkins, Quantum Steenrod squares and the
  equivariant pair-of-pants in symplectic cohomology, Preprint
  ArXiv:1810.02738.



\end{thebibliography}
\end{document}